\newtheorem{theorem}{Theorem}
\theoremstyle{plain}
\newtheorem{corollary}{Corollary}
\newtheorem{definition}{Definition}
\newtheorem{example}{Example}
\newtheorem{proposition}{Proposition}
\newtheorem{remark}{Remark}
\numberwithin{equation}{section}
\begin{document}
\title[Spectral theory for $RO^{\ast }$-algebras]{On the Spectral theory for
Rickart Ordered $^{\ast }$-algebras }
\author{Dmitry Sh. Goldstein}
\address{Dmitry Sh. Goldstein, Holon Academic Institute of Technology, 52
Golomb St., P.O. Box 305, Holon 58102, Israel}
\email{dmitryg@hit.ac.il}
\author{Alexander A. Katz}
\address{Alexander A. Katz, Department of Mathematics and Computer Science,
300 Howard Ave., DaSilva Hall 314, Staten Island, NY 10301, USA}
\email{katza@stjohns.edu}
\author{Roman B. Shklyar}
\address{Roman B. Shklyar, Holon Academic Institute of Technology, 52 Golomb
St., P.O. Box 305, Holon 58102, Israel}
\email{rshklyar@macs.biu.ac.il}
\date{July 12, 2006}
\subjclass[2000]{Primary 46K05, 46L05; Secondary 06F25}
\keywords{Rickart $C^{\ast }$-algebras, Rickart Ordered $^{\ast }$-algebras, 
$RO^{\ast }$-algebras, $C^{\ast }$-algebras}
\thanks{This paper is in final form and no version of it will be submitted
for publication elsewhere.}

\begin{abstract}
$RO^{\ast }$-algebras are defined and studied. For $RO^{\ast }$-algebra $T$
, using properties of partial order, it is established that the set of
bounded elements can be endowed with $C^{\ast }$-norm. The structure of
commutative subalgebras of $T$ is considered and the Spectral Theorem for
any self-adjoint element of $T$ is proven.
\end{abstract}

\maketitle

\section{Introduction}

The theory of $AW^{\ast }$-algebras and Baer $^{\ast }$-ring was introduced
and intensively studied by Kaplansky (see \cite{Kaplansky51} and \cite%
{Kaplansky68}). Those rings are defined axiomatically by an annihilator
condition and have a rich algebraic structure.

Let us recall some definitions:

\begin{definition}
A Baer $^{\ast }$-ring is a $^{\ast }$-ring\ $A$ such that for any subset $S$
of $A$, the right annihilator of $S$ is the principle right ideal generated
by a projection (a self-adjoint idempotent).
\end{definition}

The projections in Baer $^{\ast }$-rings form a lattice and admit a
classical equivalence relation. It has been proven that the set of all
projections in a $AW^{\ast }$-algebra forms a complete lattice. A lot of
important properties which are always fulfilled in $W^{\ast }$-algebras hold
as well in the $AW^{\ast }-$algebras (polar decomposition, properties of\
partial isometries, etc.). A regular construction of measurable operators
affiliated to a finite $AW^{\ast }-$algebra has been done by Berberian. This
construction is executed in an elegant algebraic manner and the output can
be interpreted as an algebra of unbounded linear operators satisfying the
annihilator condition. All results mentioned above can be found in \cite%
{Berberian72}.

Rickart $^{\ast }$-rings are $\sigma $-analogues of the Baer $^{\ast }$%
-rings. Similarly to the Baer $^{\ast }$-rings, Rickart $^{\ast }$-rings are
defined by an annihilator condition, but in another, more \textquotedblright
modest\textquotedblright\ way:

\begin{definition}
A Rickart $^{\ast }$-ring is a $^{\ast }$-ring\ $A$ such that for any
element $x$ of $A$ the right annihilator of $x$ is the principle right ideal
generated by a projection.
\end{definition}

The structure of Rickart $C^{\ast }$-algebras was studied in details in the
papers of Kaplansky, Berberian, Maeda, Ara and the first author of the
present paper (see \cite{Ara89}-\cite{Goldstein95}). It turns out so that
the basic properties such as the equivalence of the right and left
projections, the polar decomposition and the $\sigma $-normality also hold
in Rickart $C^{\ast }$-algebras (see \cite{Ara89}, \cite{AraGoldstein93}, 
\cite{Berberian72}).

Similarly to the $AW^{\ast }$-case, a finite Rickart $C^{\ast }$-algebra $A$
enjoys a regular embedding into a ring of measurable operators (see \cite%
{AraGoldstein93} and for more details \cite{Goldstein95}). This ring of
measurable operators plays a crucial role in the proof of the polar
decomposition property in Rickart $C^{\ast }$-algebras.

In this paper we define more general objects which can be also interpreted
as the algebra of unbounded operators endowed with Rickart's annihilator
condition. In the next section of the present paper we define $RO^{\ast }$%
-algebras and establish their basic properties. Notice that in our axiomatic
and technic we were strongly influenced by Chilin's work devoted to $%
BO^{\ast }$algebras (see \cite{Chilin80}). In section 3 we prove that the
algebra of all bounded elements $M$ of a $RO^{\ast }$-algebra can be endowed
with $C^{\ast }$-norm and that $M$ is complete in that norm. Section 4 is
devoted to commutative $RO^{\ast }$-algebras and their Spectral theory.

\section{Rickart ordered $^{\ast }$-algebras}

Let $T\,$be a Rickart $^{\ast }$-algebra, 
\begin{equation*}
T_{h}=\{x\in T:x^{\ast }=x\},
\end{equation*}%
\begin{equation*}
K=\{x=\overset{n}{\underset{k=1}{\sum }}x_{k}^{\ast }x_{k}\}:\;x_{k}\in T\},
\end{equation*}%
and 
\begin{equation*}
K=\left\{ x\in T_{h}:x=\overset{n}{\underset{i=1}{\sum }}x_{i}^{\ast }x_{i},%
\text{ for }n\in \mathbb{N},\text{ }x_{i}\in T\right\} .
\end{equation*}

We say that $T\;$satisfies the \textit{positive square root axiom}\ (PSR) if
for any $x\in K$ there exists%
\begin{equation*}
y\in K\cap \left\{ x\right\} ^{\prime \prime },
\end{equation*}%
such that 
\begin{equation*}
y^{2}=x,
\end{equation*}%
where $x^{\prime \prime }$ is a bicommutant of the element $x.$

\begin{proposition}
Let $T\;$be a Rickart\ $^{\ast }$-algebra satisfying the (PSR)-axiom. Then $%
K $ is a proper cone.
\end{proposition}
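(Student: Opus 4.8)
The plan is to show that $K$ is a cone (closed under addition and nonnegative scalar multiplication) and that it is proper, meaning $K \cap (-K) = \{0\}$.

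The plan is to verify the two defining features of a proper cone separately: that $K$ is closed under addition and under multiplication by nonnegative reals, and that $K\cap(-K)=\{0\}$. The cone axioms are routine. If $x=\sum_{k=1}^{n}x_{k}^{\ast }x_{k}$ and $y=\sum_{j=1}^{m}y_{j}^{\ast }y_{j}$ lie in $K$, then concatenating the two sums exhibits $x+y$ as a finite sum of elements of the form $z^{\ast }z$, so $x+y\in K$; and for a real scalar $\lambda \geq 0$ one writes $\lambda x=\sum_{k}(\sqrt{\lambda}\,x_{k})^{\ast }(\sqrt{\lambda}\,x_{k})$, using that $\sqrt{\lambda}$ is a nonnegative real and hence $\sqrt{\lambda}\,x_{k}\in T$. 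Since $0=0^{\ast }0\in K$, this shows $K$ is a cone.

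The substance of the proposition is properness, and here the (PSR)-axiom is the essential tool. I would take $x\in K\cap(-K)$, so that both $x\in K$ and $-x\in K$. Applying (PSR) to $x$ yields $p\in K\cap\{x\}^{\prime \prime }$ with $p^{2}=x$, and applying it to $-x$ yields $q\in K\cap\{-x\}^{\prime \prime }=K\cap\{x\}^{\prime \prime }$ with $q^{2}=-x$; here I use that an element commutes with $x$ if and only if it commutes with $-x$, so the two bicommutants coincide. The key structural observation is that $p$ and $q$ commute: since $q\in\{x\}^{\prime \prime }$ and $x\in\{x\}^{\prime }$, the element $q$ commutes with $x$, hence $q\in\{x\}^{\prime }$; and since $p\in\{x\}^{\prime \prime }$ commutes with every element of $\{x\}^{\prime }$, it commutes with $q$.

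With $p$ and $q$ commuting self-adjoint elements satisfying $p^{2}+q^{2}=x+(-x)=0$, I form $s=p+iq$ (the algebra being over $\mathbb{C}$). Because $p^{\ast }=p$, $q^{\ast }=q$ and $pq=qp$, a direct computation gives
\[
s^{\ast }s=(p-iq)(p+iq)=p^{2}+q^{2}+i(pq-qp)=0.
\]
Invoking properness of the involution in a Rickart $^{\ast }$-algebra, namely $s^{\ast }s=0\Rightarrow s=0$, I conclude $p+iq=0$; taking adjoints gives $p-iq=0$, whence $p=q=0$ and therefore $x=p^{2}=0$. This establishes $K\cap(-K)=\{0\}$.

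The hard part will be ensuring the commutativity of $p$ and $q$: without it the cross term $i(pq-qp)$ does not vanish and $s=p+iq$ need not satisfy $s^{\ast }s=0$. This is precisely why the (PSR)-axiom is stated with the square root lying in the bicommutant $\{x\}^{\prime \prime }$ rather than merely in $K$; the bicommutant placement is exactly what forces $p$ and $q$ to commute and makes the final properness argument go through. A secondary point to pin down is the appeal to properness of the involution, $s^{\ast }s=0\Rightarrow s=0$, which is a standard feature of Rickart $^{\ast }$-algebras.
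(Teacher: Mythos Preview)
Your proof is correct and, in fact, cleaner than the paper's own argument. Both proofs dispatch the cone axioms trivially and then attack $K\cap(-K)=\{0\}$ by producing self-adjoint square roots via (PSR) and finishing with the properness of the involution in a Rickart $^{\ast}$-ring. The difference lies in how commutativity is obtained. You exploit the bicommutant clause of (PSR) directly: since $p,q\in\{x\}''$ and $\{x\}''\subseteq\{x\}'$ for a singleton $\{x\}$, the roots $p,q$ commute immediately, and $(p+iq)^{\ast}(p+iq)=p^{2}+q^{2}=0$ finishes the job in one line. The paper instead takes square roots $y,z$ of $x$ and $-x$, does \emph{not} invoke their commutativity from the bicommutant, but rather derives the weaker relations $yz^{2}=z^{2}y$ and $y^{2}z=zy^{2}$ from $y^{2}+z^{2}=0$, then introduces an auxiliary root $a$ of $y^{2}z^{2}$ to reach the factorization $y^{4}+a^{2}=(y^{2}+ia)(y^{2}-ia)=0$ before applying properness. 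Your route is shorter and makes more transparent use of the bicommutant hypothesis in (PSR); the paper's route shows that a weaker form of commutation (only with squares) already suffices, at the cost of an extra square-root step.
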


\begin{proof}
The properties 
\begin{equation*}
K+K\subset K,
\end{equation*}%
and 
\begin{equation*}
\lambda K\subset K,
\end{equation*}%
where $\lambda \geq 0,$ follow immediately from the definition of $K.$

Now, let\ 
\begin{equation*}
x\in K\cap (-K).
\end{equation*}%
Then$\ $%
\begin{equation*}
x=u^{\ast }u=-v^{\ast }v.
\end{equation*}%
Thus it follows that 
\begin{equation*}
u^{\ast }u+v^{\ast }v=\mathbf{0}.
\end{equation*}%
There exist $y,z\in T_{h}\;$such that$\ $%
\begin{equation*}
y^{2}=u^{\ast }u,
\end{equation*}%
\begin{equation*}
z^{2}=v^{\ast }v,
\end{equation*}%
and \ 
\begin{equation*}
y^{2}+z^{2}=\mathbf{0}.
\end{equation*}%
Therefore, 
\begin{equation*}
y^{3}+yz^{2}=\mathbf{0},
\end{equation*}%
\begin{equation*}
yz^{2}=-y^{3},
\end{equation*}%
and \ 
\begin{equation*}
yz^{2}=z^{2}y.
\end{equation*}%
In just the same way we conclude that\ 
\begin{equation*}
y^{2}z=zy^{2}.
\end{equation*}%
Hence,%
\begin{equation*}
y^{2}z^{2}=yz^{2}y=(zy^{\ast })(zy)\in K.
\end{equation*}%
There exists%
\begin{equation*}
a\in \left\{ y^{2}z^{2}\right\} ^{\prime \prime },
\end{equation*}%
\ such that \ 
\begin{equation*}
a=a^{\ast }\text{ and }a^{2}=y^{2}z^{2}.
\end{equation*}%
Note that\ 
\begin{equation*}
y\in \left\{ y^{2}z^{2}\right\} ^{\prime \prime },
\end{equation*}%
and 
\begin{equation*}
ya=ay.
\end{equation*}%
Further,%
\begin{equation*}
0=y^{2}(y^{2}+z^{2})=y^{2}+a^{2}=(y^{2}+ia)(y^{2}-ia).
\end{equation*}%
Consequently,%
\begin{equation*}
y^{2}+ia=\mathbf{0},\text{ and }x=y^{2}=\mathbf{0}.
\end{equation*}
\end{proof}

\begin{remark}
The cone $K$ defines \ a partial order on the algebra $T$ :%
\begin{equation*}
x\leq y\overset{def}{\Longleftrightarrow }y-x\in K.
\end{equation*}
\end{remark}

\begin{definition}
We say that $T$ satisfies the Fisher-Riesz axiom (FR) if the following holds:

If a sequence $\{x_{n}\}\subset T_{h}$ satisfies conditions\ 
\begin{equation*}
\mathbf{0}\leq x_{n}\leq \varepsilon _{n}\mathbf{1,}
\end{equation*}%
such that 
\begin{equation*}
\underset{k=1}{\overset{\infty }{\sum }}\varepsilon _{k}<\infty ,
\end{equation*}%
then there exists 
\begin{equation*}
\sup \underset{k=1}{\overset{n}{\sum }}x_{k}\in T_{h}.
\end{equation*}
\end{definition}

\begin{definition}
Rickart $^{\ast }$-algebra $T$\ is called $RO^{\ast }$-algebra if\ $T$\
satisfies the axioms (PSR) and (FR).
\end{definition}

\begin{definition}
Let $T$ be a $RO$ $^{\ast }$-algebra. A $^{\ast }$-subalgebra\ $B\;$of $T$
is called a $RO^{\ast }$-subalgebra, if $B$ is $RO^{\ast }$-algebra and 
\begin{equation*}
RP_{B}(x)=RP_{T}(x),
\end{equation*}%
for all $x\in B.$
\end{definition}

\begin{remark}
Recall that $BO^{\ast }$-algebra is Baer $\ast $-algebra satisfying the
axioms (PSR) and (FR) (see \cite{Chilin80} for details).
\end{remark}

\begin{example}
Let 
\begin{equation*}
X=[0,1],
\end{equation*}%
and let $F$ be a $\sigma $-algebra of $X$ that contains each subset 
\begin{equation*}
A\subseteq X,
\end{equation*}%
such that either $A$ or $X\backslash A$ is countable.\ Define $T$ to be a $%
^{\ast }$-algebra of all measurable complex functions on $(X,F)$. It is
easily seen that $T$ is a $RO^{\ast }$-algebra which is not $BO^{\ast }$%
-algebra.
\end{example}

\section{The algebra of bounded elements in a $RO^{\ast }$-algebra}

The order properties which are axiomatically defined on $RO^{\ast }$-algebra 
$T$ allow us to specify a set of bounded elements of $T$.

In arbitrary $RO^{\ast }$-algebra $T\,$\ we will extract some $C^{\ast }$%
-subalgebra which contains all of bounded elements of $T$.

\begin{definition}
An element 
\begin{equation*}
a+ib,
\end{equation*}%
$(a,b\in T_{h})\;$is \textit{bounded, }if there exists $\lambda \geq 0$ such
that 
\begin{equation*}
-\lambda \mathbf{1\leq }a,b\leq \lambda \mathbf{1.}
\end{equation*}
\end{definition}

\begin{proposition}
The set of all bounded elements of a$\ RO^{\ast }$-algebra is a $RO^{\ast }$%
-subalgebra. If 
\begin{equation*}
x^{\ast }x\leq \mathbf{1,}
\end{equation*}%
then 
\begin{equation*}
xx^{\ast }\leq \mathbf{1}.
\end{equation*}
\end{proposition}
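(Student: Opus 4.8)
The plan is to treat the two assertions separately, beginning with the symmetry statement $x^{\ast }x\leq \mathbf{1}\Rightarrow xx^{\ast }\leq \mathbf{1}$, which is short and self-contained. Assuming $x^{\ast }x\leq \mathbf{1}$, I would apply (PSR) to the positive element $\mathbf{1}-x^{\ast }x\in K$ to obtain a self-adjoint $w\in K\cap \{x^{\ast }x\}^{\prime \prime }$ with $w^{2}=\mathbf{1}-x^{\ast }x$. Conjugating by $x$ gives $x(\mathbf{1}-x^{\ast }x)x^{\ast }=(xw)(xw)^{\ast }\in K$ (using $w=w^{\ast }$), and since $x(\mathbf{1}-x^{\ast }x)x^{\ast }=xx^{\ast }-(xx^{\ast })^{2}$, writing $p=xx^{\ast }$ we obtain $p^{2}\leq p$. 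The crux is then the algebraic identity
\[
\mathbf{1}-p=(\mathbf{1}-p)^{2}+(p-p^{2}),
\]
whose two summands lie in $K$: the first is of the form $z^{\ast }z$ because $p$ is self-adjoint, and the second is the element $p-p^{2}\in K$ just produced. Hence $\mathbf{1}-p\in K+K\subseteq K$, i.e. $xx^{\ast }\leq \mathbf{1}$.

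For the subalgebra statement, write $M$ for the set of bounded elements. That $M$ is a self-adjoint linear subspace is immediate from $K+K\subseteq K$, $\lambda K\subseteq K$, and the fact that $a$ is order-bounded iff $-a$ is. The essential point is closure under multiplication, for which I would isolate the characterization \emph{$u\in M$ whenever $u^{\ast }u\leq \mathbf{1}$}. To prove it, note that $(\mathbf{1}\mp u)^{\ast }(\mathbf{1}\mp u)=\mathbf{1}\mp (u+u^{\ast })+u^{\ast }u\in K$ forces $\pm (u+u^{\ast })\leq \mathbf{1}+u^{\ast }u\leq 2\mathbf{1}$, so $\mathrm{Re}\,u=\tfrac{1}{2}(u+u^{\ast })$ is order-bounded; replacing $u$ by $iu$, which leaves $u^{\ast }u$ unchanged, bounds $\mathrm{Im}\,u$ as well. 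Note this uses only the one-sided inequality $u^{\ast }u\leq \mathbf{1}$.

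With the lemma in hand, multiplicative closure reduces by polarization to products of bounded self-adjoint elements. For self-adjoint $a,b$ normalized so that $-\mathbf{1}\leq a,b\leq \mathbf{1}$, I would first record $a^{2}\leq \mathbf{1}$: taking $s=(\mathbf{1}-a)^{1/2}\in \{a\}^{\prime \prime }\cap K$ via (PSR), the congruence $s(\mathbf{1}+a)s=\mathbf{1}-a^{2}$ is positive. Then the congruence estimate $(ab)^{\ast }(ab)=b\,a^{2}\,b\leq b^{2}\leq \mathbf{1}$ places $ab$ under the hypothesis of the lemma, so $ab\in M$; expanding a general product $xy$ into its four self-adjoint subproducts and using that $M$ is a linear subspace yields $xy\in M$. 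The step I expect to be the main obstacle is precisely this recovery of two-sided order boundedness of $ab$ from the single inequality $(ab)^{\ast }(ab)\leq \mathbf{1}$, and the $(\mathbf{1}\mp u)^{\ast }(\mathbf{1}\mp u)$ device is what makes it go through.

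Finally, to see that $M$ is itself an $RO^{\ast }$-algebra with $RP_{M}=RP_{T}$, I would observe that every projection satisfies $\mathbf{0}\leq e\leq \mathbf{1}$ and hence lies in $M$; thus the projection $RP_{T}(x)$ generating the right annihilator of any $x\in M$ already belongs to $M$ and serves as $RP_{M}(x)$, giving both the Rickart condition for $M$ and the equality $RP_{M}(x)=RP_{T}(x)$. For (PSR) in $M$, if $x\in K$ with $x\leq \lambda \mathbf{1}$ then its $T$-square root $y$ satisfies $y^{\ast }y=y^{2}=x\leq \lambda \mathbf{1}$, so $y\in M$ by the lemma (and $y\in \{x\}^{\prime \prime }$ computed in $M$, since $\{x\}^{\prime \prime }$ relative to $M$ contains $\{x\}^{\prime \prime }\cap M$). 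For (FR), the partial sums obey $\mathbf{0}\leq \sum_{k=1}^{n}x_{k}\leq (\sum_{k\geq 1}\varepsilon _{k})\mathbf{1}$, so the supremum furnished by (FR) in $T$ is again order-bounded and lies in $M$.
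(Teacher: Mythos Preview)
Your argument is correct, and in one respect more complete than the paper's: the paper never actually proves the implication $x^{\ast}x\leq\mathbf{1}\Rightarrow xx^{\ast}\leq\mathbf{1}$, whereas your identity $\mathbf{1}-p=(\mathbf{1}-p)^{2}+(p-p^{2})$ with $p=xx^{\ast}$ settles it cleanly.

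For multiplicative closure your route differs from the paper's. The paper works directly with self-adjoint $x,y$ and bounds the hermitian and anti-hermitian parts of $xy$ by expanding $(x\pm y)^{2}\in K$ and $(x\pm iy)^{\ast}(x\pm iy)\in K$, obtaining $-(x^{2}+y^{2})\leq xy+yx\leq x^{2}+y^{2}$ and the analogous bound for $i(xy-yx)$, after first showing $x^{2}\leq\lambda^{2}\mathbf{1}$ from $(\lambda\mathbf{1}-x)(\lambda\mathbf{1}+x)\in K$. You instead isolate the reusable lemma ``$u^{\ast}u\leq\mathbf{1}\Rightarrow u\in M$'' via $(\mathbf{1}\mp u)^{\ast}(\mathbf{1}\mp u)\in K$ and then feed $u=ab$ into it after the congruence estimate $(ab)^{\ast}(ab)=ba^{2}b\leq b^{2}\leq\mathbf{1}$. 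The two arguments are close cousins (both rest on positivity of elements of the form $z^{\ast}z$), but yours packages the mechanism so that it also handles (PSR) in $M$ and half of the second assertion for free; the paper instead re-derives the (PSR) bound from $(\mathbf{1}\pm y)^{2}\geq\mathbf{0}$, giving $-(\mathbf{1}+x)/2\leq y\leq(\mathbf{1}+x)/2$. The Rickart and (FR) verifications are essentially identical to the paper's.
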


\begin{proof}
Let us denote by\ $M$ the set of all bounded elements of $T$. Let $x,y\in M,$%
\begin{equation*}
x=a+ib,\;y=c+id,
\end{equation*}%
\begin{equation*}
-\lambda \mathbf{1\leq }a,b\leq \lambda \mathbf{1,}
\end{equation*}%
\begin{equation*}
\mathbf{-}\mu \mathbf{1\leq }c,d\leq \mu \mathbf{1,}
\end{equation*}%
for some $\lambda ,\mu \geq 0$. Obviously, 
\begin{equation*}
x+y,x^{\ast },\alpha x\in M,
\end{equation*}%
where $\alpha \in \mathbb{C}.$ We will prove that $xy$ is also bounded. It
will be sufficient to prove it for the case 
\begin{equation*}
x,y\in T_{h},
\end{equation*}%
i.e. 
\begin{equation*}
x=a,\;y=c.
\end{equation*}%
If \ 
\begin{equation*}
x,y\geq \mathbf{0},
\end{equation*}%
then, since 
\begin{equation*}
\sqrt{x}\in \{x\}^{\prime \prime },
\end{equation*}%
\begin{equation*}
\sqrt{y}\in \{y\}^{\prime \prime },
\end{equation*}%
we have 
\begin{equation*}
xy\geq \mathbf{0}.
\end{equation*}%
Therefore, the inequalities 
\begin{equation*}
\lambda \mathbf{1+}x\geq \mathbf{0}\text{ and }\lambda \mathbf{1-}x\geq 
\mathbf{0},
\end{equation*}%
imply 
\begin{equation*}
x^{2}\leq \lambda ^{2}\mathbf{1.}
\end{equation*}%
In just the same way 
\begin{equation*}
y^{2}\leq \mu ^{2}\mathbf{1.}
\end{equation*}%
Since 
\begin{equation*}
(x+y)^{2}\geq \mathbf{0},
\end{equation*}%
we have%
\begin{equation*}
-(x^{2}+y^{2})\leq (xy+yx)\leq (x^{2}+y^{2}).
\end{equation*}%
Consequently,%
\begin{equation*}
-(\lambda ^{2}+y^{2})\mathbf{1}\leq (xy+yx)\leq (\lambda ^{2}+\mu ^{2})%
\mathbf{1}.
\end{equation*}%
Similarly,%
\begin{equation*}
-(\lambda ^{2}+y^{2})\mathbf{1}\leq i(xy-yx)\leq (\lambda ^{2}+\mu ^{2})%
\mathbf{1}.
\end{equation*}%
On the other hand,%
\begin{equation*}
xy=\frac{(xy+yx)}{2}-i\left( \frac{\mathbf{1}}{2i}\right) (xy-yx),
\end{equation*}%
hence, $xy\in M.$

Thus,\ $M$ is a $^{\ast }$-subalgebra of $T.$

Let now $x\in M,\;$%
\begin{equation*}
R_{M}(x)=\left\{ y\in M:xy=\mathbf{0}\right\} ,
\end{equation*}%
be a right annihilator of $x.$ There exists a projection 
\begin{equation*}
e\in M,
\end{equation*}%
such that 
\begin{equation*}
R(x)=\left\{ x\in T:xy=\mathbf{0}\right\} =eT.
\end{equation*}%
It's clear that 
\begin{equation*}
R_{M}(x)=R(x)\cap M.
\end{equation*}%
On the other hand, 
\begin{equation*}
\mathbf{0}\leq e\leq \mathbf{1,}
\end{equation*}%
therefore $e\in M,\;$and 
\begin{equation*}
RP_{M}(x)=RP_{T}(x).
\end{equation*}%
If 
\begin{equation*}
x\in M\cap K,
\end{equation*}%
then there exists 
\begin{equation*}
y\in K\cap \left\{ x\right\} ^{\prime \prime },
\end{equation*}%
such that 
\begin{equation*}
y^{2}=x.
\end{equation*}%
Since 
\begin{equation*}
-\mathbf{1}-y^{2}\leq 2y\leq \mathbf{1+}y^{2},
\end{equation*}%
we have 
\begin{equation*}
-\frac{(\mathbf{1}+x)}{2}\leq y\leq \frac{(\mathbf{1}+x)}{2},
\end{equation*}%
i.e. $y$ is a bounded element. Thus, $M$ satisfies (PSR).

Let now a sequence $\left\{ x_{n}\right\} _{n\in N}\;$satisfies conditions 
\begin{equation*}
\mathbf{0}\leq x_{n}\leq \varepsilon _{n}\mathbf{1,}
\end{equation*}%
and \ 
\begin{equation*}
\sum_{n=1}^{\infty }\varepsilon _{n}<+\infty .
\end{equation*}%
Obviously, 
\begin{equation*}
\sum_{n=1}^{k}x_{n}\leq \left( \sum_{n=1}^{k}\varepsilon _{n}\right) \mathbf{%
1\leq }\left( \sum_{n=1}^{\infty }\varepsilon _{n}\right) \mathbf{1,}
\end{equation*}%
for all $k\in \mathbb{N}$. Then$\ $%
\begin{equation*}
\mathbf{0}\leq \sup \sum_{n=1}^{k}x_{n}\leq \left( \sum_{n=1}^{\infty
}\varepsilon _{n}\right) \mathbf{1,}
\end{equation*}%
i.e., the algebra $M$ satisfies the axiom (RF).
\end{proof}

\begin{theorem}
Let $T$ be a $RO^{\ast }$-algebra, $M$ denotes a subalgebra of all bounded
elements of $T.$ Then there exists a norm on $M$ \ such that $M$ is
(relatively to this norm)$\,$a $C^{\ast }$-algebra.
\end{theorem}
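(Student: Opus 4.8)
The plan is to manufacture the norm straight from the order relation. For a self-adjoint bounded element $a$ put $\|a\|=\inf\{\lambda\ge0:-\lambda\mathbf{1}\le a\le\lambda\mathbf{1}\}$, and for an arbitrary $x\in M$ put $\|x\|=\bigl(\inf\{\lambda\ge0:x^{*}x\le\lambda\mathbf{1}\}\bigr)^{1/2}$; both infima are finite precisely because the elements are bounded. The elementary order identities already exploited above remain available: products of commuting positive elements are positive, the map $z\mapsto y^{*}zy$ carries $K$ into $K$, and $(x\pm y)^{*}(x\pm y)\ge\mathbf{0}$. I would first dispose of definiteness through an Archimedean property: if $\mathbf{0}\le d$ and $d\le\varepsilon\mathbf{1}$ for every $\varepsilon>0$, then $d=\mathbf{0}$. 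This is where (FR) enters for the first time: applying it to the constant sequence $x_{k}=d$ with $\varepsilon_{k}=k^{-2}$ yields $\sup_{n}\sum_{k=1}^{n}d=\sup_{n}nd\in T_{h}$, and the existence of this supremum forces $d\le\mathbf{0}$, whence $d=\mathbf{0}$ by properness of $K$. Applied to $d=x^{*}x$, and using that $x^{*}x=\mathbf{0}$ entails $x=\mathbf{0}$, this gives $\|x\|=0\Rightarrow x=\mathbf{0}$.

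Second, I would verify the remaining norm axioms in order. Homogeneity is immediate, and $\|x^{*}\|=\|x\|$ follows by rescaling the relation $x^{*}x\le\mathbf{1}\Rightarrow xx^{*}\le\mathbf{1}$ established above. Monotonicity, $\mathbf{0}\le u\le v\Rightarrow\|u\|\le\|v\|$, is read off directly from the definition. Submultiplicativity comes from $y^{*}x^{*}xy\le\lambda\,y^{*}y$ (valid whenever $x^{*}x\le\lambda\mathbf{1}$, since $y^{*}(\lambda\mathbf{1}-x^{*}x)y\ge\mathbf{0}$) together with monotonicity, giving $\|xy\|\le\|x\|\,\|y\|$. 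For the full triangle inequality I would first bound the self-adjoint element $x^{*}y+y^{*}x$: the inequalities $(tx\mp t^{-1}y)^{*}(tx\mp t^{-1}y)\ge\mathbf{0}$ give $\pm(x^{*}y+y^{*}x)\le t^{2}x^{*}x+t^{-2}y^{*}y$ for every $t>0$, and minimizing the resulting scalar bound $t^{2}\|x\|^{2}+t^{-2}\|y\|^{2}$ over $t$ yields $\|x^{*}y+y^{*}x\|\le2\|x\|\,\|y\|$. Combining this with the self-adjoint triangle inequality applied to $(x+y)^{*}(x+y)=x^{*}x+(x^{*}y+y^{*}x)+y^{*}y$ produces $\|x+y\|^{2}\le\bigl(\|x\|+\|y\|\bigr)^{2}$.

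Third comes the $C^{*}$-identity, which I regard as the main obstacle. Unwinding the definitions, $\|x^{*}x\|\le\|x\|^{2}$ is already covered by submultiplicativity and $\|x^{*}\|=\|x\|$, so the whole identity reduces to the single inequality $\|b\|^{2}\le\|b^{2}\|$ for $b\in M\cap K$; equivalently, to the operator monotonicity of the square root in the special form \begin{equation*}b\ge\mathbf{0},\ b^{2}\le\mu^{2}\mathbf{1}\ \Longrightarrow\ b\le\mu\mathbf{1}.\end{equation*} In a $C^{*}$-algebra this is instant from functional calculus, but here it must be wrung out of the axioms, and this is precisely the step that justifies the presence of (FR) alongside (PSR). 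After normalizing $\mu=1$ and setting $h=\mathbf{1}-b^{2}$, so that $\mathbf{0}\le h\le\mathbf{1}$, I would recover $\mathbf{1}-b$ as the order-limit of the (formally convergent) binomial expansion $\sum_{k\ge1}a_{k}h^{k}$ of $\mathbf{1}-\sqrt{\mathbf{1}-h}$, whose coefficients satisfy $a_{k}>0$ and $\sum_{k}a_{k}=1$. Since $\mathbf{0}\le a_{k}h^{k}\le a_{k}\mathbf{1}$ and $\sum a_{k}<\infty$, the supremum $S=\sup_{n}\sum_{k=1}^{n}a_{k}h^{k}$ exists by (FR), with $\mathbf{0}\le S\le\mathbf{1}$; the delicate point, and the real work of the proof, is to show $S=\mathbf{1}-b$. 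For this I would verify $(\mathbf{1}-S)^{2}=b^{2}$ and $\mathbf{1}-S\ge\mathbf{0}$ with $\mathbf{1}-S\in\{b^{2}\}^{\prime\prime}$, and then invoke uniqueness of the positive square root inside the bicommutant (PSR) to identify $\mathbf{1}-S$ with $b=\sqrt{b^{2}}$. The only genuinely nontrivial task is justifying that products and rearrangements of the series are compatible with the order-theoretic passage to the limit, i.e. a monotone-continuity statement for multiplication on bounded increasing nets, which (FR) again underwrites. Granting the lemma, $b\le\mathbf{1}$ follows, hence $\|b\|^{2}\le\|b^{2}\|$ and the $C^{*}$-identity.

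Finally, completeness is handled by (FR). Given a Cauchy sequence I pass to a subsequence with $\|z_{n+1}-z_{n}\|\le\delta_{n}$, $\sum\delta_{n}<\infty$, split each difference into self-adjoint parts, and split each self-adjoint part $a_{k}$ with $-\delta_{k}\mathbf{1}\le a_{k}\le\delta_{k}\mathbf{1}$ as a difference $a_{k}=\alpha_{k}-\beta_{k}$ of two positive elements bounded by $\delta_{k}\mathbf{1}$. Applying (FR) to each of the two resulting positive series produces suprema $A,B\in M$, and the tail estimate $\mathbf{0}\le A-\sum_{k\le n}\alpha_{k}\le\bigl(\sum_{k>n}\delta_{k}\bigr)\mathbf{1}$ upgrades the order-convergence to norm-convergence; assembling the four pieces exhibits the norm-limit inside $M$. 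This, together with the preceding steps, shows that $(M,\|\cdot\|)$ is a $C^{*}$-algebra.
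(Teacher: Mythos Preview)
Your approach is viable but proceeds quite differently from the paper. The paper also begins with the order norm on $M_{h}$ and uses (FR) to prove that $(M_{h},\|\cdot\|)$ is a Banach space, exactly as in your completeness step. However, rather than verifying the $C^{*}$-identity by hand, the paper shows that the cone $K\cap M_{h}$ is norm-closed, invokes the Hahn--Banach theorem to produce separating positive functionals, and runs the GNS construction to obtain a faithful $*$-representation $\pi\colon M\to B(H)$; the $C^{*}$-norm is then simply $q(x)=\|\pi(x)\|$, pulled back from operators. This sidesteps entirely the square-root monotonicity lemma $b\ge\mathbf{0},\ b^{2}\le\mathbf{1}\Rightarrow b\le\mathbf{1}$ that you isolate as the crux, at the price of importing Hahn--Banach and representation theory. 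Your route is more self-contained and stays inside the order structure, which is attractive; two points, though, need tightening. First, (PSR) as formulated in the paper asserts only \emph{existence} of a positive square root in the bicommutant, not uniqueness, so your identification $\mathbf{1}-S=b$ requires a separate argument (it does go through: once you know the two candidate roots commute, $(u-v)(u+v)=0$ together with the Rickart annihilator condition and $LP(u),LP(v)\le LP(u+v)$ forces $u=v$). Second, the passage from $S=\sup_{n}S_{n}$ to $(\mathbf{1}-S)^{2}=b^{2}$ is not really a ``monotone-continuity'' consequence of (FR); what actually drives it is the tail estimate $0\le S-S_{n}\le\bigl(\sum_{k>n}a_{k}\bigr)\mathbf{1}$, which gives norm convergence $S_{n}\to S$ in the normed structure you have already built, after which the submultiplicativity and triangle inequality from your step two handle the product.
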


\begin{proof}
Define 
\begin{equation*}
\left\Vert x\right\Vert =\inf \left\{ \lambda \geq 0:-\lambda \mathbf{1\leq }%
x\leq \lambda \mathbf{1}\right\} .
\end{equation*}%
One can easily check that $\left\Vert \cdot \right\Vert $ is a norm on $%
M_{h}.$ We will prove that \ $\left( M_{h},\left\Vert \cdot \right\Vert
\right) $ is a Banach space. Let 
\begin{equation*}
\left\{ x_{n}\right\} _{n\in N}\subset M_{h},
\end{equation*}%
\begin{equation*}
\sum_{n=1}^{\infty }x_{n}<+\infty .
\end{equation*}%
Let us denote%
\begin{equation*}
a_{n}=\left\Vert x_{n}\right\Vert \mathbf{1}+x_{n}\mathbf{,\;}%
b_{n}=\left\Vert x_{n}\right\Vert \mathbf{1}-x_{n}.
\end{equation*}%
Then 
\begin{equation*}
x_{n}=\frac{1}{2}\left( a_{n}-b_{n}\right) ,
\end{equation*}%
\begin{equation*}
\left\Vert a_{n}\right\Vert \leq 2\left\Vert x_{n}\right\Vert ,
\end{equation*}%
\begin{equation*}
\left\Vert b_{n}\right\Vert \leq 2\left\Vert x_{n}\right\Vert .
\end{equation*}%
Therefore%
\begin{equation*}
\mathbf{0}\leq a_{n}\leq 2\left\Vert x_{n}\right\Vert \mathbf{1},\;\mathbf{0}%
\leq b_{n}\leq 2\left\Vert x_{n}\right\Vert \mathbf{1.}
\end{equation*}

Because of axiom the\ (RF) axiom,\ there exist$\ $%
\begin{equation*}
\sup_{n}\sum_{n=1}^{k}a_{n}=a,\text{ and}\ \sup_{n}\sum_{n=1}^{k}b_{n}=b.
\end{equation*}

Since 
\begin{equation*}
\sum_{k=1}^{n}a_{k}\leq 2\sum_{k=1}^{\infty }\left\Vert x_{k}\right\Vert 
\mathbf{1},
\end{equation*}%
we have inequalities\ 
\begin{equation*}
\mathbf{0}\leq a\leq 2\sum_{k=1}^{\infty }\left\Vert x_{k}\right\Vert 
\mathbf{1},
\end{equation*}%
i.e. $a\in M$.\ In just the same way we obtain the fact that $b\in M$.

We will prove now that $\ $%
\begin{equation*}
||a-\sum_{n=1}^{k}a_{n}||\rightarrow 0,\text{ when}\ k\rightarrow \infty .
\end{equation*}%
In fact,%
\begin{equation*}
\left\Vert a-\sum_{n=1}^{k}a_{n}\right\Vert =\underset{s\geq k}{\sup }\left(
\sum_{i=1}^{s}a_{n}-\sum_{n=1}^{k}a_{n}\right) =\underset{s\geq k}{\sup }%
\sum_{n=k+1}^{s}a_{n}
\end{equation*}%
Note that 
\begin{equation*}
\mathbf{0}\leq \sum_{n=k+1}^{s}a_{n}\leq \left( 2\sum_{n=k+1}^{\infty
}||x_{n}||\right) \mathbf{1.}
\end{equation*}%
Hence 
\begin{equation*}
a-\sum_{n=1}^{k}a_{n}\leq \left( 2\sum_{n=k+1}^{\infty }||x_{n}||\right) 
\mathbf{1,}
\end{equation*}%
and 
\begin{equation*}
\left\Vert a-\sum_{n=1}^{k}a_{n}\right\Vert \leq 2\sum_{n=k+1}^{\infty
}||x_{n}||\rightarrow 0,
\end{equation*}%
as\ $k\rightarrow \infty $. In the analogous way 
\begin{equation*}
\left\Vert b-\sum_{n=1}^{k}b_{n}\right\Vert \rightarrow 0,
\end{equation*}%
as\ $k\rightarrow \infty $.

Now,\ 
\begin{equation*}
\sum_{n=1}^{k}x_{n}=\frac{1}{2}\sum_{n=1}^{k}\left( a_{n}-b_{n}\right)
\rightarrow \frac{1}{2}\left( a-b\right) ,
\end{equation*}%
as\ $k\rightarrow \infty $. Thus, $\left( M_{h},\left\Vert \cdot \right\Vert
\right) $ is a Banach space.

The next claim is to prove that the cone 
\begin{equation*}
K\cap M,
\end{equation*}%
is closed in the norm\ $\left\Vert \cdot \right\Vert $ Let $x\in K,$ and%
\begin{equation*}
\left\Vert x\right\Vert \leq 1.
\end{equation*}%
Then 
\begin{equation*}
\mathbf{0}\leq x\leq \mathbf{1},
\end{equation*}%
\begin{equation*}
0\leq \mathbf{1}-x\leq \mathbf{1},
\end{equation*}%
\begin{equation*}
\left\Vert \mathbf{1}-x\right\Vert \leq 1.
\end{equation*}%
Conversely, if $x\in M_{h},$%
\begin{equation*}
\left\Vert x\right\Vert \leq 1,
\end{equation*}%
\begin{equation*}
\left\Vert \mathbf{1}-x\right\Vert \leq 1,
\end{equation*}%
then 
\begin{equation*}
\mathbf{1}-x\leq \mathbf{1},
\end{equation*}%
\begin{equation*}
x\geq \mathbf{0}.
\end{equation*}%
Thus,%
\begin{equation*}
\left\{ x\in K:\left\Vert x\right\Vert \leq 1\right\} =\left\{ x\in
M_{h}:\left\Vert x\right\Vert \leq 1,\;\left\Vert \mathbf{1}-x\right\Vert
\leq 1\right\} .
\end{equation*}%
Let%
\begin{equation*}
x_{n}\geq \mathbf{0},
\end{equation*}%
\begin{equation*}
\underset{n\rightarrow \infty }{\lim }x_{n}=x.
\end{equation*}%
Then 
\begin{equation*}
\underset{n\rightarrow \infty }{\lim }\left\Vert x_{n}\right\Vert
=\left\Vert x\right\Vert .
\end{equation*}%
It is easy to see that there exists 
\begin{equation*}
\lambda >0,
\end{equation*}%
such that 
\begin{equation*}
\left\Vert \lambda x_{n}\right\Vert \leq 1.
\end{equation*}%
Since 
\begin{equation*}
\lambda x_{n}\geq \mathbf{0},
\end{equation*}%
we have that 
\begin{equation*}
\left\Vert \mathbf{1}-\lambda x_{n}\right\Vert \leq 1.
\end{equation*}%
Obviously, 
\begin{equation*}
\left\Vert \lambda x\right\Vert \leq 1,
\end{equation*}%
and%
\begin{equation*}
\left\Vert \mathbf{1}-\lambda x\right\Vert \leq 1.
\end{equation*}%
Therefore 
\begin{equation*}
\lambda x\in K,
\end{equation*}%
and 
\begin{equation*}
x\in K.
\end{equation*}%
Thus, 
\begin{equation*}
K\cap M_{h},
\end{equation*}%
is closed in $\left( M_{h},\left\Vert \cdot \right\Vert \right) .$ Assume
now that 
\begin{equation*}
x\in M,
\end{equation*}%
\begin{equation*}
x\neq 0,
\end{equation*}%
and%
\begin{equation*}
y=-x^{\ast }x.
\end{equation*}%
Then 
\begin{equation*}
y\neq \mathbf{0},
\end{equation*}%
and 
\begin{equation*}
y\notin K.
\end{equation*}%
By the Hahn-Banach Theorem there exists a continuous functional $\varphi $\
on $M_{h}$ which is non-negative on\ $K$ and 
\begin{equation*}
\varphi (y)<0.
\end{equation*}%
We now extend the functional $\varphi $ up to a functional on $M$ \ in the
usual linear manner: 
\begin{equation*}
\varphi (a+ib)=\varphi (a)+i\varphi (b),
\end{equation*}%
for any $a,b\in M_{h}$. Therefore $\varphi \;$is the state on $M,$ i.e.%
\begin{equation*}
\varphi (x^{\ast }x)>0,
\end{equation*}%
and we can put 
\begin{equation*}
\varphi (\mathbf{1})=1.
\end{equation*}

Let $\pi _{\varphi }$ be a representation of a $^{\ast }$-algebra $M$ in the
algebra of all bounded linear operators on some Hilbert space $H_{\varphi }$
(the Gelfand-Naimark-Segal construction). It is well known that for all\ $%
x\in M$ the following inequality is valid:%
\begin{equation*}
\varphi (y^{\ast }x^{\ast }xy)\leq \left\Vert x^{\ast }x\right\Vert \varphi
(y^{\ast }y).
\end{equation*}%
Hence$\ \pi _{\varphi }(x)$ is a bounded linear operator on $H_{\varphi }$,
that is, $\pi _{\varphi }$ is a $^{\ast }$-representation of $\ M$ into the $%
^{\ast }$-algebra $B(H_{\varphi })$.

Now, let 
\begin{equation*}
\pi =\underset{\varphi \in \Phi }{\bigoplus }\pi _{\varphi },
\end{equation*}%
be a direct sum of the all states on$\ M$, where $\Phi $ denotes the set of
all states on $M$. Then $\Phi $\ is a faithful representation of $M$ into
the algebra $B\left( \underset{\varphi \in \Phi }{\bigoplus }H_{\varphi
}\right) $. It is clear now that the norm 
\begin{equation*}
q(x)=\left\Vert \pi (x)\right\Vert ,
\end{equation*}%
induces the norm $\left\Vert \cdot \right\Vert $\ on\ $M_{h}$.
\end{proof}

\section{Commutative $RO^{\ast }$-algebras and the Spectral Theorem}

In this section we describe the structure of commutative $RO^{\ast }$%
-algebras and prove the Spectral Theorem for self-adjoint element of a $%
RO^{\ast }$-algebra.

\begin{theorem}
Let $T$ be a commutative $RO^{\ast }$-algebra. Then $T_{h}$ is a
conditionally $\sigma $-complete lattice.
\end{theorem}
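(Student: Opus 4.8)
The plan is to first promote the cone order on $T_{h}$ to a \emph{lattice} order by constructing positive parts, and then to obtain conditional $\sigma$-completeness by reducing any bounded-above countable family to a single increasing sequence whose supremum is assembled out of norm-bounded blocks. For $z\in T_{h}$ the element $|z|=\sqrt{z^{2}}$ exists and lies in $\{z^{2}\}^{\prime\prime}$ by (PSR); put $z^{+}=\tfrac12(z+|z|)$ and $z^{-}=\tfrac12(|z|-z)$. Commutativity of $T$ gives $z^{+}z^{-}=\tfrac14(|z|^{2}-z^{2})=\mathbf 0$, together with $z^{+}+z^{-}=|z|$ and $z^{+}-z^{-}=z$. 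Let $e=RP(z^{+})$; since $e\in\{z^{+}\}^{\prime\prime}$ it commutes with $z^{+},z^{-},|z|$, and $z^{+}z^{-}=\mathbf 0$ puts $z^{-}\in R(z^{+})=(\mathbf 1-e)T$, so $ez^{-}=\mathbf 0$ while $ez^{+}=z^{+}$. Hence
\[ z^{+}=ez^{+}=e(z^{+}+z^{-})=e\,|z|. \]
As $e$ and $|z|$ are commuting positive elements, their product is positive (the fact used in the proof of the Proposition on bounded elements), so $z^{+}\ge\mathbf 0$, and likewise $z^{-}\ge\mathbf 0$. Thus $z^{+}$ is an upper bound of $\{z,\mathbf 0\}$; and if $c\ge z$, $c\ge\mathbf 0$, then $ec\ge ez=z^{+}$ while $c\ge ec$ (because $(\mathbf 1-e)c\ge\mathbf 0$), so $c\ge z^{+}$. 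Therefore $z^{+}=z\vee\mathbf 0$, and $x\vee y=x+(y-x)^{+}$, $x\wedge y=-\bigl((-x)\vee(-y)\bigr)$ exist for all $x,y\in T_{h}$; so $T_{h}$ is a lattice.

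Since $\inf_{n}x_{n}=-\sup_{n}(-x_{n})$, it suffices to produce suprema. Given countable $\{x_{n}\}\subset T_{h}$ with an upper bound $c$, the finite joins $y_{n}=x_{1}\vee\dots\vee x_{n}$ are increasing with $y_{n}\le c$ and share the upper bounds of $\{x_{n}\}$; setting $z_{n}=y_{n}-y_{1}$ reduces the task to constructing $\sup_{n}z_{n}$ for an increasing sequence $\mathbf 0\le z_{1}\le z_{2}\le\cdots\le d$ with $d=c-y_{1}\ge\mathbf 0$. Using the lattice operations just obtained, for $m\in\mathbb N$ set $q_{m}=RP\bigl((d-m\mathbf 1)^{+}\bigr)$ and $e_{m}=\mathbf 1-q_{m}$. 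These projections lie in $\{d\}^{\prime\prime}$, increase with $m$, and satisfy $d\,e_{m}\le m\,e_{m}$, so by commutativity each $z_{n}e_{m}$ is self-adjoint with $\mathbf 0\le z_{n}e_{m}\le m\mathbf 1$, i.e. $z_{n}e_{m}\in M_{h}$. Now $M$ is a commutative Rickart $C^{\ast}$-algebra (the Proposition on bounded elements together with the $C^{\ast}$-norm Theorem), hence $M_{h}$ is conditionally $\sigma$-complete; so $s^{(m)}=\sup_{n}z_{n}e_{m}$ exists in $M_{h}$ with $\mathbf 0\le s^{(m)}\le m\mathbf 1$, and multiplying by $e_{m'}$ for $m'\le m$ yields the coherence relations $s^{(m)}e_{m'}=s^{(m')}$. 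It then remains to glue the coherent family $\{s^{(m)}\}$ into one element $s\in T_{h}$ with $se_{m}=s^{(m)}$, and to verify $s=\sup_{n}z_{n}$.

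This gluing, in the case where the bound $d$ is not itself bounded, is the heart of the matter and the step I expect to be hardest. Two things must be secured. First, that $\sup_{m}e_{m}=\mathbf 1$, i.e. the truncation projections exhaust the identity; this reflects that $d$ is a genuine (finite) element and should rest on the properness of $K$ together with the $\sigma$-completeness of the projection lattice furnished by the Rickart condition. Second, that the increasing but norm-\emph{unbounded} data $\{s^{(m)}\}$ really determine an element of $T_{h}$; this is \emph{not} a consequence of norm completeness (the (FR)-axiom) alone, and appears to require viewing $T$ as an algebra of (possibly unbounded) measurable functions affiliated with the commutative Rickart $C^{\ast}$-algebra $M$, over whose basically disconnected Gelfand spectrum the supremum is simply the pointwise one. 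Once $s$ is produced, checking that it is the \emph{least} upper bound of $\{z_{n}\}$ — and not merely an upper bound on each block $e_{m}$ — is the delicate point, and it is precisely here that the interplay of Rickart's annihilator condition with (FR) is indispensable; this completes the proof that $T_{h}$ is a conditionally $\sigma$-complete lattice.
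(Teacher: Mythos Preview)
Your construction of $z\vee\mathbf 0$ via $|z|=\sqrt{z^{2}}$ and the annihilator projection of $z^{+}$ is essentially the paper's argument, so the lattice half is fine. The gap is exactly where you locate it: for conditional $\sigma$-completeness you truncate by projections $e_m$, take suprema $s^{(m)}$ in $M_h$, and then need to glue the norm-unbounded family $\{s^{(m)}\}$ into a single element of $T_h$. You do not carry this out; instead you appeal to a representation of $T$ as measurable functions over the spectrum of $M$, but no such representation has been established for $RO^{\ast}$-algebras at this stage---indeed, that kind of structure theorem is closer to a \emph{consequence} of what you are proving than a tool available for its proof. Neither (FR) nor the Rickart condition by itself produces suprema of increasing, norm-unbounded sequences, so as written the argument is incomplete.

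The paper avoids gluing altogether with a single invertible element. Given $\mathbf 0\le z_n\le d$, set
\[
w=\sqrt{d^{2}+\mathbf 1}+d,\qquad w^{-1}=\sqrt{d^{2}+\mathbf 1}-d,
\]
so that $w$ is invertible in $T$ with $\mathbf 0\le w^{-1}\le\mathbf 1$. Then $\{w^{-1}z_n w^{-1}\}$ is increasing and bounded above by $w^{-1}\in M_h$, hence has a supremum $s$ in $M_h$ (since $M$ is a commutative Rickart $C^{\ast}$-algebra and $M_h$ is therefore conditionally $\sigma$-complete). Because multiplication by the positive invertible $w$ is an order automorphism of $T_h$ in the commutative setting, $wsw=\sup_n z_n$ in $T_h$; the lattice structure already established lets one check that the supremum computed in $M_h$ remains the supremum in $T_h$. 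No truncation, no exhaustion $\sup_m e_m=\mathbf 1$, and no gluing are required.
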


\begin{proof}
Assume that $x\in T_{h}$ and $x$ is non-comparable with $\mathbf{0}$. There
exists 
\begin{equation*}
y\geq \mathbf{0,}
\end{equation*}%
such that 
\begin{equation*}
y^{2}=x^{2}.
\end{equation*}

Put 
\begin{equation*}
a=\frac{1}{2}\left( y+x\right) ,
\end{equation*}%
and%
\begin{equation*}
b=\frac{1}{2}\left( y-x\right) .
\end{equation*}%
Since $x$ is non-comparable with $\mathbf{0}$, we can conclude that $a,b\neq 
\mathbf{0}$.

Note that 
\begin{equation*}
b\in r(b+x),
\end{equation*}%
and%
\begin{equation*}
a\in r(x-a).
\end{equation*}%
Because $T$ is Rickart $^{\ast }$-algebra, there exist projections $e$ and $%
f,$ such that 
\begin{equation*}
r(x+b)=eT,
\end{equation*}%
and%
\begin{equation*}
r(a-x)=fT.
\end{equation*}%
Obviously, 
\begin{equation*}
xe=ye,
\end{equation*}%
\begin{equation*}
xf=-yf,
\end{equation*}%
and 
\begin{equation*}
\left( \mathbf{1}-e\right) \left( \mathbf{1}-f\right) =\mathbf{0}.
\end{equation*}%
Therefore, 
\begin{equation*}
f-ef=\mathbf{1}-e,
\end{equation*}%
and 
\begin{equation*}
ye-x=-x(1-e)=-x(f-ef)=y(f-ef)\geq \mathbf{0}.
\end{equation*}%
Thus, 
\begin{equation*}
ye\geq x.
\end{equation*}%
Now, let 
\begin{equation*}
d\in T_{h},
\end{equation*}%
\begin{equation*}
d\geq \mathbf{0,}
\end{equation*}%
and 
\begin{equation*}
d\geq x.
\end{equation*}%
Then 
\begin{equation*}
d=de+d(1-e)\geq ede\geq exe=ye.
\end{equation*}%
This yields 
\begin{equation*}
ye=\sup \left\{ x,\mathbf{0}\right\} =x\vee \mathbf{0}.
\end{equation*}%
We have showed that $T_{h}$ is a vector lattice.

To prove the conditional $\sigma $-completeness let us consider an
increasing sequence $\left\{ x_{n}\right\} $, where 
\begin{equation*}
x_{n}\geq \mathbf{0}
\end{equation*}%
and 
\begin{equation*}
x_{n}\leq v.
\end{equation*}%
The element 
\begin{equation*}
w=\sqrt{v^{2}+\mathbf{1}}+v,
\end{equation*}%
has an inverse 
\begin{equation*}
w^{-1}=\sqrt{v^{2}+\mathbf{1}}-v.
\end{equation*}%
The sequence $\left\{ w^{-1}x_{n}w^{-1}\right\} $ is increasing and bounded
by $w^{-1}$. It is easy to see that 
\begin{equation*}
w^{-1}\leq \mathbf{1}.
\end{equation*}%
Thus 
\begin{equation*}
w^{-1}x_{n}w^{-1}\in M_{h}.
\end{equation*}%
By Theorem 1 $M$ is a Rickart $C^{\ast }$-algebra, consequently, $M_{h}$ is
a conditionally $\sigma $-complete lattice (see, for example, \cite{Ara89}).
Therefore, there exists 
\begin{equation*}
x=\sup \left\{ w^{-1}x_{n}w^{-1}\right\} .
\end{equation*}%
It is clear that 
\begin{equation*}
wxw=\sup \left\{ x_{n}\right\} ,
\end{equation*}%
belongs to $T_{h}$.
\end{proof}

\begin{corollary}
Let $T$ be a commutative $RO^{\ast }$-algebra. Then $T_{h}$ is $K_{\sigma }$%
-space.
\end{corollary}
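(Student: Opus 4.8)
The plan is to unwind the terminology and observe that the statement is essentially a repackaging of the preceding Theorem: a $K_{\sigma}$-space is, by definition, a Dedekind $\sigma$-complete vector lattice, that is, an Archimedean vector lattice in which every countable order-bounded subset admits a supremum (equivalently an infimum). I would therefore verify, one at a time, that each of these defining features is already available for $T_{h}$.

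First I would fix the ordered vector space structure. By the Proposition asserting that $K$ is a proper cone, together with the Remark that follows it, the relation $x \le y \Leftrightarrow y - x \in K$ is a partial order on $T_{h}$; the cone properties $K + K \subseteq K$ and $\lambda K \subseteq K$ for $\lambda \ge 0$ are precisely the compatibility of $\le$ with addition and with multiplication by non-negative scalars, while $K \cap (-K) = \{\mathbf 0\}$ is its antisymmetry. Hence $(T_{h}, \le)$ is a partially ordered real vector space with positive cone $K$.

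Next I would import the lattice and completeness properties directly from the preceding Theorem. Its proof explicitly exhibits $x \vee \mathbf 0 = ye$ for every $x \in T_{h}$, and establishes thereby that $T_{h}$ is a vector lattice (all finite suprema and infima then follow by translation, e.g. $x \vee y = y + ((x - y) \vee \mathbf 0)$); its statement furnishes the conditional $\sigma$-completeness, i.e. the existence of $\sup\{x_{n}\}$ for every countable order-bounded family. The only ingredient not literally stated there is the Archimedean property, which I would derive from $\sigma$-completeness: if $\mathbf 0 \le nx \le d$ for all $n \in \mathbb N$, then $\{nx\}$ is order bounded, so $s = \sup_{n} nx$ exists, and since $\{nx\}$ is increasing one has $x + s = \sup_{n} (n+1)x = s$, whence $x = \mathbf 0$.

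The proof then concludes by assembling these three facts: $T_{h}$ is an Archimedean vector lattice in which every countable order-bounded subset has a least upper bound, which is exactly the definition of a $K_{\sigma}$-space. I do not anticipate a real obstacle, since all the substantive work was carried out in the preceding Theorem; the only point demanding a little care is to confirm that ``conditionally $\sigma$-complete lattice'' for the vector lattice $(T_{h}, \le)$ coincides with the $\sigma$-Dedekind completeness required of a $K_{\sigma}$-space, and to supply the Archimedean property that some formulations of the definition bundle into it.
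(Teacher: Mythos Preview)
Your proposal is correct and matches the paper's approach: the Corollary is stated without proof in the paper, as an immediate consequence of the preceding Theorem establishing that $T_{h}$ is a conditionally $\sigma$-complete vector lattice. Your unpacking of the definition of a $K_{\sigma}$-space and the explicit check of the Archimedean property simply spell out what the paper leaves implicit.
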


\begin{theorem}
Let $T$ be a $RO^{\ast }$-algebra and let $B$\ be some maximal commutative
subalgebra of $T$. Then $B$\ is a $RO^{\ast }$-subalgebra of $T$.
\end{theorem}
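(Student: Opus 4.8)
The plan is to show that a maximal commutative subalgebra $B$ of a $RO^{\ast}$-algebra $T$ inherits the two defining axioms (PSR) and (FR), and that the Rickart annihilator projections computed in $B$ coincide with those computed in $T$. The crucial structural fact I would exploit is that for any element $x\in B$, the square root, spectral projections, and related elements furnished by the ambient algebra $T$ all lie in the bicommutant $\{x\}^{\prime\prime}$; since $B$ is \emph{maximal} commutative, every element of $T$ commuting with all of $B$ must itself lie in $B$, and in particular $\{x\}^{\prime\prime}\subseteq B$ whenever $x\in B$. This single observation is what makes the closure properties work, so I would isolate and prove it first as a preliminary remark.

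First I would verify (PSR) for $B$. Given $x\in K\cap B$ (where $K$ is the positive cone of $T$), the ambient axiom (PSR) produces $y\in K\cap\{x\}^{\prime\prime}$ with $y^{2}=x$. By the maximality argument, $y\in\{x\}^{\prime\prime}\subseteq B$, and $y$ is positive in $B$ because positivity is defined by the same cone $K$; hence $B$ satisfies (PSR). Next I would check that $B$ is itself a Rickart $^{\ast}$-algebra with the correct projections. For $x\in B$, let $e$ be the projection generating the right annihilator $R_{T}(x)=eT$ in $T$; the standard Rickart theory gives $e=\mathbf{1}-RP_{T}(x)$ with $RP_{T}(x)\in\{x\}^{\prime\prime}\subseteq B$, so $e\in B$. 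One then checks $R_{B}(x)=R_{T}(x)\cap B=eB$, which simultaneously shows $B$ is Rickart and that $RP_{B}(x)=RP_{T}(x)$, which is precisely the subalgebra compatibility condition in the definition of $RO^{\ast}$-subalgebra.

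For axiom (FR), I would take a sequence $\{x_{n}\}\subset B_{h}$ with $\mathbf{0}\leq x_{n}\leq\varepsilon_{n}\mathbf{1}$ and $\sum_{k}\varepsilon_{k}<\infty$. The ambient (FR) yields $s=\sup_{n}\sum_{k=1}^{n}x_{k}\in T_{h}$, and the task is to show $s\in B$. Here I would argue that each partial sum $\sum_{k=1}^{n}x_{k}$ lies in $B$ and commutes with every element $z\in B$; passing this commutation relation to the supremum (using that order-limits in the conditionally $\sigma$-complete lattice structure established in the earlier theorem preserve the commutation $sz=zs$ by monotone approximation), one concludes $s$ commutes with all of $B$, hence $s\in B$ by maximality.

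The main obstacle I expect is precisely this last point: transferring the commutation relation $z\,(\sum_{k=1}^{n}x_{k})=(\sum_{k=1}^{n}x_{k})\,z$ across the supremum to obtain $zs=sz$. Unlike the algebraic closure used for (PSR), the supremum is an order-theoretic limit, and one must verify that multiplication is compatible with monotone suprema in this setting — essentially an order-continuity of multiplication on the positive cone. I would address this by reducing to the bounded case via the conjugation trick $w^{-1}(\cdot)w^{-1}$ used in the proof of the lattice theorem, where $M$ is a genuine $C^{\ast}$-algebra (by Theorem~1) and the desired continuity is classical, and then conjugating back; alternatively, one invokes that in a $K_{\sigma}$-space the relevant multiplication operators are order-continuous, so commutation is preserved under the sup.
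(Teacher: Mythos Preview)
Your treatment of (PSR) and of the Rickart projection compatibility is exactly what the paper does: both follow at once from the observation $\{x\}''\subseteq B$ for $x\in B$, which you correctly isolate as the key structural fact.

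For (FR) the paper takes a different and cleaner route than either of the two fixes you propose. The supremum $s=\sup_n\sum_{k=1}^n x_k$ is automatically \emph{bounded} (dominated by $(\sum_k\varepsilon_k)\mathbf{1}$), so $s\in M$. The paper then invokes the proof of Theorem~1 to obtain genuine \emph{norm convergence} $\|s-s_n\|\to 0$ in the $C^{\ast}$-algebra $M$; since commutation passes through norm limits trivially, this yields $s\in(B\cap M)'$ without any appeal to order-continuity of multiplication. What remains is to upgrade from $s\in(B\cap M)'$ to $s\in B'$, i.e.\ to handle a possibly unbounded test element $a\in B_h$, $a\geq\mathbf{0}$. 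Here the paper applies the conjugation trick to $a$, not to the sequence: setting $y=\sqrt{a^{2}+\mathbf{1}}+a$ one has $y^{-1}\in B\cap M$ and $y^{-1}ay^{-1}\in B\cap M$; then $s$ commutes with both, and cancelling $y^{-1}$ on each side gives $sa=as$. Maximality then forces $s\in B$.

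So the obstacle you flag is real, but the paper sidesteps it entirely by using the norm topology on $M$ rather than any order-continuity of multiplication. Your fallback (a) is in the right spirit, but note that what needs conjugating is the test element $z\in B$ (to make it bounded), not the sequence $\{s_n\}$, which already sits in $M$; the paper's two-step argument (norm limit first, conjugate the test element second) makes this division of labour explicit and avoids the $K_{\sigma}$-space machinery you mention in (b).
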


\begin{proof}
The proof that $B$ is a Rickart $\ast $-algebra is analogous to \cite%
{Chilin80}. Further, let 
\begin{equation*}
x\in K_{B}=\left\{ \overset{n}{\underset{i=1}{\sum }}x_{i}^{\ast
}x_{i}:x_{i}\in B,\text{ where}\;i=1,2,...,n\right\} .
\end{equation*}%
There exists 
\begin{equation*}
y\in K_{B}\cap \left\{ x\right\} ^{\prime \prime },
\end{equation*}%
such that 
\begin{equation*}
y^{2}=x.
\end{equation*}%
Thus the algebra $B$ satisfies the(PSR) axiom. To prove the (RF) axiom, we
take 
\begin{equation*}
x_{n}\in B,
\end{equation*}%
\begin{equation*}
\mathbf{0}\leq x_{n}\leq \varepsilon _{n}\mathbf{1,}
\end{equation*}%
and%
\begin{equation*}
\underset{k=1}{\overset{\infty }{\sum }}\varepsilon _{k}<\infty .
\end{equation*}%
One can see there exists $\ $%
\begin{equation*}
x=\sup \underset{k=1}{\overset{n}{\sum }}x_{k},
\end{equation*}%
in\ $T_{h}$. Since 
\begin{equation*}
x\leq \left( \underset{k=1}{\overset{\infty }{\sum }}\varepsilon _{k}\right) 
\mathbf{1,}
\end{equation*}%
we can conclude that $x$ belongs to the Rickart $C^{\ast }$-algebra of
bounded elements of $T$. We have seen above (Theorem 1) that 
\begin{equation*}
\left\Vert x-s_{n}\right\Vert \rightarrow 0,
\end{equation*}%
as $n\rightarrow \infty ,$ where$\ $%
\begin{equation*}
s_{n}=\underset{k=1}{\overset{n}{\sum }}x_{k},
\end{equation*}%
and therefore 
\begin{equation*}
x\in \left( B\cap M\right) ^{\prime }.
\end{equation*}%
Now let 
\begin{equation*}
a\in B_{h},
\end{equation*}%
and%
\begin{equation*}
a\geq \mathbf{0}.
\end{equation*}%
Then 
\begin{equation*}
y=\sqrt{x^{2}+\mathbf{1}}-x\geq a+\mathbf{1,}
\end{equation*}%
and $y$ is invertible. Notice that 
\begin{equation*}
\mathbf{0}\leq y^{-1}\leq \mathbf{1,}
\end{equation*}%
and 
\begin{equation*}
\mathbf{0}\leq y^{-1}ay^{-1}\leq y^{-1}.
\end{equation*}%
Thus 
\begin{equation*}
y^{-1}ay^{-1}\in B\cap M.
\end{equation*}%
Since 
\begin{equation*}
y^{-1}\in B\cap M,
\end{equation*}%
we have 
\begin{equation*}
xy^{-1}=y^{-1}x.
\end{equation*}%
Therefore 
\begin{equation*}
y^{-1}xay^{-1}=y^{-1}axy^{-1},
\end{equation*}%
and%
\begin{equation*}
xa=ax.
\end{equation*}%
As any element $c\in B$ is linear combination of positive ones, we obtain
that 
\begin{equation*}
xc=cx.
\end{equation*}%
Since $B$ is a maximal one we can conclude that $x\in B$.

To finish the proof, notice that 
\begin{equation*}
x=\sup s_{n},
\end{equation*}%
in $T_{h}.$ Hence 
\begin{equation*}
x=\sup s_{n},
\end{equation*}%
in $B_{h}$. Thus $B$ is a $RO^{\ast }$-algebra.
\end{proof}

Now we can proceed to formulate the main result of this section.

\begin{theorem}[Spectral Theorem]
Let $T$ be a $RO^{\ast }$-algebra, $x\in T_{h}$. There exists a unique
family of projections $\left\{ e_{\lambda }\right\} _{\lambda \in \mathbf{R}%
} $ satisfying the following properties:

(a) 
\begin{equation*}
e_{\lambda }\leq e_{\mu }\text{ if }\lambda \leq \mu ;
\end{equation*}

(b) 
\begin{equation*}
\inf e_{\lambda }=\mathbf{0};
\end{equation*}

(c) 
\begin{equation*}
\sup e_{\lambda }=\mathbf{1};
\end{equation*}

(d) 
\begin{equation*}
\underset{\mu <\lambda }{\sup }e_{\mu }=e_{\lambda };
\end{equation*}

(e) 
\begin{equation*}
e_{\lambda }\cdot x\leq \lambda e_{\lambda },\;e_{_{\lambda }^{\bot }}\cdot
x\geq \lambda e_{_{\lambda }^{\bot }}.
\end{equation*}

Moreover, \ for all $\varepsilon >0$ there exists $\delta >0$ such that 
\begin{equation*}
\left\Vert x-\overset{n}{\underset{i=1}{\sum }}\xi _{i}\left( \lambda
_{i}-\lambda _{i-1}\right) \right\Vert <\varepsilon ,
\end{equation*}%
for any partition 
\begin{equation*}
\left\{ \lambda _{i}\right\} _{i=0}^{\infty },
\end{equation*}%
of the real line with 
\begin{equation*}
\sup \left( \lambda _{i}-\lambda _{i-1}\right) <\delta ,
\end{equation*}%
and 
\begin{equation*}
\xi _{i}\in \left[ \lambda _{i-1},\lambda _{i}\right] .
\end{equation*}
\end{theorem}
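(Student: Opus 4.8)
The plan is to collapse the (a priori noncommutative and unbounded) problem into a commutative, order‑theoretic one and then read the spectral family directly off the lattice structure, in the spirit of the Freudenthal spectral theorem. First I would enlarge $\{x\}$ to a maximal commutative $^{\ast}$-subalgebra $B\subseteq T$; by the previous theorem $B$ is a commutative $RO^{\ast}$-subalgebra, so by Theorem 2 and its Corollary the self-adjoint part $B_{h}$ is a conditionally $\sigma$-complete vector lattice (a $K_{\sigma}$-space) with order unit $\mathbf{1}$. Because $B$ is a $RO^{\ast}$-subalgebra one has $RP_{B}=RP_{T}$, so every projection and every supremum computed inside $B$ is simultaneously valid in $T$; this is exactly what lets us work entirely inside a commutative lattice in which $x$ commutes with all the projections we construct.

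In $B_{h}$ I would set, for each $\lambda\in\mathbb{R}$,
\begin{equation*}
e_{\lambda}=RP\big((\lambda\mathbf{1}-x)^{+}\big),\qquad (\lambda\mathbf{1}-x)^{+}=(\lambda\mathbf{1}-x)\vee\mathbf{0},
\end{equation*}
the support projection of the positive part of $\lambda\mathbf{1}-x$; morally $e_{\lambda}$ is the band projection onto $\{x<\lambda\}$. Monotonicity (a) is immediate, since $\lambda\le\mu$ forces $(\lambda\mathbf{1}-x)^{+}\le(\mu\mathbf{1}-x)^{+}$ and hence $e_{\lambda}\le e_{\mu}$. Left-continuity (d) is automatic from $\bigvee_{\mu<\lambda}\{x<\mu\}=\{x<\lambda\}$, i.e. $\sup_{n}e_{\lambda-1/n}=e_{\lambda}$ (a countable supremum, available by $\sigma$-completeness). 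Property (e) comes from the disjointness $(x-\lambda\mathbf{1})^{+}\wedge(x-\lambda\mathbf{1})^{-}=\mathbf{0}$: the negative part $(x-\lambda\mathbf{1})^{-}=(\lambda\mathbf{1}-x)^{+}$ has support $e_{\lambda}$, hence is annihilated by $e_{\lambda}^{\bot}$, while by disjointness the positive part is annihilated by $e_{\lambda}$; writing $x-\lambda\mathbf{1}=(x-\lambda\mathbf{1})^{+}-(x-\lambda\mathbf{1})^{-}$ and multiplying by $e_{\lambda}$ and by $e_{\lambda}^{\bot}$ gives $e_{\lambda}(x-\lambda\mathbf{1})=-(\lambda\mathbf{1}-x)^{+}\le\mathbf{0}$ and $e_{\lambda}^{\bot}(x-\lambda\mathbf{1})=e_{\lambda}^{\bot}(x-\lambda\mathbf{1})^{+}\ge\mathbf{0}$, which is exactly (e).

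The main obstacle is (b) and (c), where the unboundedness of $x$ enters, and here I would use that a $K_{\sigma}$-space is Archimedean. Put $p=\inf_{\lambda}e_{\lambda}=\inf_{n}e_{-n}$. Multiplying the first inequality in (e) by the commuting positive element $p$ (for which $pe_{\lambda}=p$) yields $px\le\lambda p$ for every $\lambda$; taking $\lambda=0$ gives $-pxp\ge\mathbf{0}$, and taking $\lambda=-N$ gives $\mathbf{0}\le Np\le-pxp$ for all $N\in\mathbb{N}$. The Archimedean property forces $p=\mathbf{0}$, which is (b). Dually, with $q^{\bot}=\inf_{\lambda}e_{\lambda}^{\bot}=\mathbf{1}-\sup_{\lambda}e_{\lambda}$, the second inequality in (e) gives $\mathbf{0}\le Nq^{\bot}\le q^{\bot}xq^{\bot}$ for all $N$, so $q^{\bot}=\mathbf{0}$ and $\sup_{\lambda}e_{\lambda}=\mathbf{1}$, which is (c).

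For uniqueness I would compare any second family $\{e_{\lambda}^{\prime}\}$ satisfying (a)--(e) with the one above: for $\mu>\lambda$ the commuting projection $e_{\lambda}e_{\mu}^{\prime\bot}$ satisfies both $(x-\mu\mathbf{1})e_{\lambda}\le(\lambda-\mu)e_{\lambda}$ and $(x-\mu\mathbf{1})e_{\mu}^{\prime\bot}\ge\mathbf{0}$, so $(\lambda-\mu)e_{\lambda}e_{\mu}^{\prime\bot}\ge\mathbf{0}$ with $\lambda-\mu<0$, forcing $e_{\lambda}e_{\mu}^{\prime\bot}=\mathbf{0}$, i.e. $e_{\lambda}\le e_{\mu}^{\prime}$; the symmetric computation gives $e_{\lambda}^{\prime}\le e_{\mu}$, and letting $\mu\downarrow\lambda$ through the left-continuity (d) on each side yields $e_{\lambda}=e_{\lambda}^{\prime}$. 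Finally, for the approximation I would fix a partition and set $g_{i}=e_{\lambda_{i}}-e_{\lambda_{i-1}}$; from (e) one reads off $\lambda_{i-1}g_{i}\le xg_{i}\le\lambda_{i}g_{i}$, hence $-(\lambda_{i}-\lambda_{i-1})g_{i}\le(x-\xi_{i}\mathbf{1})g_{i}\le(\lambda_{i}-\lambda_{i-1})g_{i}$ for $\xi_{i}\in[\lambda_{i-1},\lambda_{i}]$. The orthogonal slices $(x-\xi_{i}\mathbf{1})g_{i}$ are uniformly bounded by the mesh, so by the (FR) axiom their sum is a \emph{bounded} element of $M$ --- even though $x$ itself is unbounded --- representing $x-\sum_{i}\xi_{i}g_{i}$ and satisfying $\big\|x-\sum_{i}\xi_{i}g_{i}\big\|\le\sup_{i}(\lambda_{i}-\lambda_{i-1})<\delta$; choosing $\delta=\varepsilon$ closes the estimate in the $C^{\ast}$-norm supplied by Theorem 1. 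The two subtle points, beyond (b)/(c), are the legitimacy of this infinite order-sum and the fact that it genuinely represents $x-\sum_{i}\xi_{i}g_{i}$ in $T$, both of which rest on the $\sigma$-completeness guaranteed by (FR).
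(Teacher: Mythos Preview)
Your approach coincides with the paper's: pass to a maximal commutative $^{\ast}$-subalgebra $B$ containing $x$ (Theorem~3 makes $B$ a $RO^{\ast}$-subalgebra), recognize $B_{h}$ as a $K_{\sigma}$-space via Theorem~2 and its Corollary, and then run the Freudenthal-type spectral theorem in that lattice. The paper's own proof is a single sentence citing Vulikh for this last step, so your explicit construction $e_{\lambda}=RP\big((\lambda\mathbf{1}-x)^{+}\big)$ together with the verification of (a)--(e), the uniqueness argument, and the Riemann-sum estimate simply unpacks what the paper leaves to the reference.
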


\begin{proof}
One obtains the proof by using the classical proof of the Spectral Theorem
for $K_{\sigma }$-spaces in \cite{Vulikh61}, and previous results of this
section.
\end{proof}

\end{document}